\newtheorem{theorem}{Theorem}
\newtheorem{lemma}[theorem]{Lemma}
\newtheorem{proposition}[theorem]{Proposition}
\theoremstyle{definition}
\newtheorem{example}[theorem]{Example}
\newtheorem{remark}[theorem]{Remark}
\title{Minimal free resolution of generalized repunit algebras}
\author{Isabel Colaço}
\address{Departamento de Matem\'atica e Ci\^encias F\'{\i}sicas, Instituto Polit\'ecnico de Beja, 7800-295 Beja, Portugal}
\email{isabel.colaco@ipbeja.pt}
\author{Ignacio Ojeda}
\address{Departamento de Matem\'aticas, Universidad de Extremadura, 06071 Badajoz, Spain}
\email{ojedamc@unex.es}
\thanks{The authors are partially supported by Proyecto de Excelencia de la Junta de Andaluc\'{\i}a (ProyExcel\_00868) and by Proyecto de investigaci\'on del Plan Propio - UCA 2022-2023 (PR2022-011). The second author is partially supported by grant PID2022-138906NB-C21 funded by MICIU/AEI/10.13039/501100011033 and by ERDF/EU`, and by research group FQM024 funded by Junta de Extremadura (Spain)/FEDER funds.}
\subjclass[2020]{Primary: 16W50, 13D02 secondary: 20M14}
\keywords{Graded rings and modules; syzygies, resolutions, complexes and commutative rings; commutative semigroups}
\begin{document}

\begin{abstract}
Let $\Bbbk$ be an arbitrary field and let $b > 1, n > 1$ and $a$ be three positive integers. In this paper we explicitly describe a minimal $S-$graded free resolution of the semigroup algebra $\Bbbk[S]$ when $S$ is a generalized repunit numerical semigroup, that is, when $S$ is the submonoid of $\mathbb{N}$ generated by $\{a_1, a_2, \ldots, a_n\}$ where $a_1 = \sum_{j=0}^{n-1} b^j$ and $a_i - a_{i-1} = a\, b^{i-2},\ i = 2, \ldots, n$, with $\gcd(a,a_1) = 1$.
\end{abstract}

\maketitle

\section{Introduction}

Let $\Bbbk[\mathbf{x}] = \Bbbk[x_1, \ldots, x_n]$ be the polynomial ring in $n$ indeterminates over an arbitrary field $\Bbbk$, let $S$ be the numerical semigroup with minimal system of generators $A = \{{a}_1, \ldots, {a}_n\} \subset \mathbb{N}$ (see \cite{libro-ns} for details on numerical semigroups) and let $\Bbbk[S] := \bigoplus_{a \in S} \Bbbk \chi^a$ be the semigroup $\Bbbk-$algebra of $S$.

Considering the ring $\Bbbk[\mathbf x]$ graded by $S$ via $\deg(x_i) = a_i,\ i = 1, \ldots, n,$ we have that the kernel of the $\Bbbk-$algebra homomorphism \[\varphi_A : \Bbbk[\mathbf{x}] \longrightarrow \Bbbk[S], x_i \mapsto \chi^{a_i}\] determines a presentation of $\Bbbk[S]$ as $S-$graded $\Bbbk[\mathbf{x}]-$module. Indeed, the so-called \emph{toric ideal} $I_A := \ker(\varphi_A)$ is known (see, e.g. \cite[Lemma 4.1]{sturmfels}) to be generated by \[\left\{\mathbf{x}^{\mathbf{u}} - \mathbf{x}^{\mathbf{v}}\ \mid\ \sum_{i=1}^n u_i a_i = \sum_{i=1}^n v_i a_i,\ \mathbf{u}=(u_1, \ldots, u_n), \mathbf{v} = (v_1, \ldots, v_n) \in \mathbb{N}^n \right\},\] where $\mathbf x^\mathbf{u} := x_1^{u_1} \cdots x_n^{u_n}$. In particular, it is homogeneous for the grading determined by $S$.

So, if $\left\{f_i := \mathbf{x}^{\mathbf{u}_i} - \mathbf{x}^{\mathbf{v}_i}\ \mid\ i = 1, \ldots, \beta_1 \right\}$ is a minimal generating system of $I_A$ and $\varphi_0$ denotes the corresponding canonical projection, then \[\Bbbk[\mathbf{x}]^{\beta_1} \stackrel{\varphi_1:=(f_1, \ldots, f_{\beta_1})}{\longrightarrow} \Bbbk[\mathbf x] \stackrel{\varphi_0}{\longrightarrow} \Bbbk[\mathbf{x}]/I_A \cong \Bbbk[S] \to 0\] is exact and $S-$graded by suitable degree shiftings of the leftmost free module. Now, one can compute a minimal system of generators of the kernel of $\varphi_1$, say $\{\mathbf{f}_{12}, \ldots, \mathbf{f}_{\beta_2 2}\} \subset \Bbbk[\mathbf x]^{\beta_1}$, so that the sequence  \[\Bbbk[\mathbf{x}]^{\beta_2} \stackrel{\varphi_2 := (\mathbf{f}_{12} \vert \ldots \vert \mathbf{f}_{\beta_2 2})}{\longrightarrow} \Bbbk[\mathbf{x}]^{\beta_1} \stackrel{\varphi_1}{\longrightarrow} \Bbbk[\mathbf x] \stackrel{\varphi_0}{\longrightarrow} \Bbbk[\mathbf{x}]/I_A \cong \Bbbk[S] \to 0\] is exact and, after the appropriate degree shifts, $S-$graded. So, by repeating this process as many times as necessary until reaching $\ker \varphi_p = 0$, which is guaranteed by the Hilbert syzygy theorem (see, e.g., \cite[Theorem 1.13]{Eisenbud}), we obtain a \emph{minimal $S-$graded free resolution of $\Bbbk[S]$}. The minimal free resolution is unique up to isomorphism (see \cite[Section 20.1]{Eisenbud}). The $\beta_i,\ i = 1, \ldots, p$, are called \emph{Betti numbers of $\Bbbk[S]$} (see Remark \ref{Rem 5} for more details).

Computing a minimal free resolution of $\Bbbk[S]$ is possible using Groebner bases techniques. Other related tasks are to characterize the minimal free resolution $S-$graded in terms of the combinatorics within $S$ (see, for example, \cite{collectanea, OjVi1}) or, for special cases of $S$, to describe explicitly a minimal $S-$graded free resolution of $S$ in terms of $S$ basically (see e.g. \cite{philippe}). This article is about the latter.

Let $b$ and $n$ be two integers greater than one and let $S$ be the submonoid of $\mathbb{N}$ generated by $\{a_1, a_2, \ldots \} \subset \mathbb{N}$, where \[a_1 = \sum_{j=0}^{n-1} b^j\ \text{\and}\ a_i - a_{i-1} = a\, b^{i-2},\ i \geq 2,\] for $a \in \mathbb{Z}_+$ relatively prime with $a_1$. In \cite{BCO2}, it is proved that $S$ is a numerical semigroup whose minimal generating system is $A := \{a_1, \ldots, a_n\}$. These numerical semigroups are called \emph{generalized repunit numerical semigroups} (see \cite{BCO,BCO2}) as they generalize the repunit numerical semigroups introduced in \cite{repunit}.

The aim of this paper is to explicitly describe a minimal $S-$graded free of resolution of $\Bbbk[S]$ when $S$ is a generalized repunit numerical semigroup. In what follows, we consider $S$ to be a generalized repunit numerical semigroup and refer $\Bbbk[S]$ as a generalized repunit $\Bbbk-$algebra.

We notice that if $b=1$, then $S$ is generated by an arithmetic sequence. In this case, the $S-$graded free of resolution of $\Bbbk[S]$ is fully described by P. Gimenez et al. in \cite{philippe}. The minimal free resolution of numerical semigroups generated by arithmetic sequences has its own interest as, for instance, the Betti numbers of $\Bbbk[S]$ and the coordinate ring of its tangent cone ring coincide. We emphasize that, by \cite[Corollary 2]{BCO} and \cite[Theorem 3.12]{JaZar}, generalized repunit $\Bbbk-$algebras also have this property.

Finally, we emphasize that in \cite{Matsuoka1, Matsuoka2} similar techniques are applied to families closely related to ours. In particular, in \cite[Section 4]{Matsuoka1} the authors use Eagon-Northcott complexes to compute the Pseudo-Frobenius numbers of numerical semigroups associated to certain determinantal ideals. These ideas are brilliantly generalized in \cite[Section 2.1]{Matsuoka2}. 

\section{The minimal free resolution}\label{Sect2}

Let $b > 1, n > 1$ and $a > 1$ be three fixed integer numbers such that $a$ and $a_1 = \sum_{j=0}^{n-1} b^j$ are relatively prime. With the same notation as in the introduction, let $S$ be the generalized repunit numerical semigroup generated by $A = \{a_1, \ldots, a_n\}$. 

In \cite{BCO} it is proved that $I_A$ is minimally generated by $2 \times 2-$minors of the matrix 
\begin{equation}\label{ecu1}
X := (x_{ij}) = \left(\begin{array}{cccc}
x_1^b & \cdots & x_{n-1}^b & x_n^b \\ x_2 & \cdots & x_n & x_1^{a+1}
\end{array}
\right).
\end{equation}
Therefore, since $I_A$ is a determinantal ideal, the generalized repunit $\Bbbk-$algebra $\Bbbk[S]$ can be resolved by the Eagon-Northcott complex introduced in \cite{EN} and described below.

Let $y_1, y_2$ be two indeterminates and let $M_j$ be the $\Bbbk[\mathbf x]-$submodule of $\Bbbk[\mathbf x][y_1, y_2]$ generated by the monomials in $y_1$ and $y_2$ of degree $j$. Define \[\Bbbk[\mathbf x]^X_{j} := \bigwedge^{j+1} \Bbbk[\mathbf x]^n \otimes_{\Bbbk[\mathbf x]} M_{j-1},\ j = 1, \ldots, n-1,\]
where $\bigwedge^{j+1} \Bbbk[\mathbf x]^n$ is the degree $j+1$ component of the exterior algebra of the free $\Bbbk[x]-$module $\Bbbk[x]^n$. Thus, if $\{\mathbf{e}_1, \ldots, \mathbf{e}_n\}$ is the usual basis of $\Bbbk[\mathbf x]^n$; that is, the basis of $\Bbbk[\mathbf{x}]^n$ such that $\mathbf{e}_i$ has a one in place $i$ and zeros elsewhere, for each $i \in \{ 1, \ldots, n\}$, then the $\Bbbk[\mathbf x]-$module $\bigwedge^{j+1} \Bbbk[\mathbf x]^n$ is generated by $\mathbf{e}_{i_1} \wedge \cdots \wedge \mathbf{e}_{i_{j+1}}$, for each $1 \leq i_1 < \cdots < i_{j+1} \leq n$, for each $j \in \{1, \ldots, n-1\}$. 

Now, since the codimension of $I_A$ is $n-1$, because $I_A$ defines an irreducible monomial curve in the $n-$dimensional affine space over $\Bbbk$, by \cite[Theorem 2]{EN}, we conclude that 
\[0 \to \Bbbk[\mathbf x]^X_{n-1} \stackrel{d_{n-1}}{\longrightarrow} \Bbbk[\mathbf x]^X_{n-2} \stackrel{d_{n-2}}{\longrightarrow} \cdots \stackrel{d_2}{\longrightarrow}\Bbbk[\mathbf x]^X_{1} \stackrel{d_1}{\longrightarrow} \Bbbk[\mathbf x] \longrightarrow \Bbbk[\mathbf x]/I_A \cong \Bbbk[S] \to 0\] is a minimal free resolution of $\Bbbk[S]$, with \begin{equation}\label{ecu2} d_1(\mathbf e_i \wedge \mathbf e_j \otimes 1) = \left| \begin{array}{cc} x_{1i} & x_{1j} \\ x_{2i} & x_{2j} \end{array}\right|,\ \text{for every}\ 1 \leq i < j \leq n,\end{equation}
and 
\begin{equation}\label{ecu3} d_j(\mathbf{e}_{i_1} \wedge \cdots \wedge \mathbf{e}_{i_{j+1}} \otimes y_1^{u_1} y_2^{u_2}) = \sum_{k\stackrel{*}{=}1}^2 \sum_{l=1}^{j+1} (-1)^{l+1} x_{k i_l} \mathbf{e}_{i_1} \wedge \cdots \wedge \widehat{\mathbf{e}_{i_l}}  \wedge \cdots \wedge \mathbf{e}_{i_{j+1}} \otimes y_1^{u_1} y_2^{u_2} y_k^{-1},\end{equation} for every $1 \leq i_1 < \cdots < i_{j+1} \leq n,\ u_1,u_2 \in \mathbb{N}$ such that $u_1+u_2=j-1$ and $j \in \{2, \ldots, n-1\}$, where the asterisk means that we only sum over those $k$ for which $u_k > 0$ and $\widehat{\mathbf{e}_{i_l}}$ means omitting $\mathbf{e}_{i_l}$.  

\begin{lemma}
For each $j \in \{1, \ldots, n-1\}$, the $\Bbbk[\mathbf{x}]-$module $\Bbbk[\mathbf x]^X_j$ is isomorphic to $\Bbbk[\mathbf x]^{j \binom{n}{j+1}}$.
\end{lemma}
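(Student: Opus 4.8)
The plan is to compute the rank of the free module $\Bbbk[\mathbf x]^X_j$ by treating its two tensor factors separately, since each is free and the tensor product of free modules over the commutative ring $\Bbbk[\mathbf x]$ is again free with rank equal to the product of the ranks. Concretely, the strategy reduces to the two elementary counts: the number of $(j+1)$-subsets of $\{1,\ldots,n\}$, and the number of degree $j-1$ monomials in two variables.

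First I would argue that $\bigwedge^{j+1}\Bbbk[\mathbf x]^n$ is free of rank $\binom{n}{j+1}$. As recalled just before the statement, this module is generated by the elements $\mathbf e_{i_1}\wedge\cdots\wedge\mathbf e_{i_{j+1}}$ with $1\le i_1<\cdots<i_{j+1}\le n$, and these are $\Bbbk[\mathbf x]$-linearly independent because exterior powers of a free module over a commutative ring are free on the ordered wedges of the basis vectors. Since the number of strictly increasing $(j+1)$-tuples drawn from $\{1,\ldots,n\}$ is exactly $\binom{n}{j+1}$, we obtain $\bigwedge^{j+1}\Bbbk[\mathbf x]^n\cong\Bbbk[\mathbf x]^{\binom{n}{j+1}}$.

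Next I would identify $M_{j-1}$ as a free $\Bbbk[\mathbf x]$-module of rank $j$. By definition $M_{j-1}$ is generated by the monomials $y_1^{u_1}y_2^{u_2}$ with $u_1+u_2=j-1$, and since distinct monomials in $\Bbbk[\mathbf x][y_1,y_2]$ are $\Bbbk[\mathbf x]$-linearly independent, these generators form a basis. The number of pairs $(u_1,u_2)\in\mathbb{N}^2$ with $u_1+u_2=j-1$ is $j$, so $M_{j-1}\cong\Bbbk[\mathbf x]^{j}$.

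Finally I would combine the two computations: $\Bbbk[\mathbf x]^X_j=\bigwedge^{j+1}\Bbbk[\mathbf x]^n\otimes_{\Bbbk[\mathbf x]}M_{j-1}\cong\Bbbk[\mathbf x]^{\binom{n}{j+1}}\otimes_{\Bbbk[\mathbf x]}\Bbbk[\mathbf x]^{j}\cong\Bbbk[\mathbf x]^{j\binom{n}{j+1}}$, using the standard isomorphism $\Bbbk[\mathbf x]^{r}\otimes_{\Bbbk[\mathbf x]}\Bbbk[\mathbf x]^{s}\cong\Bbbk[\mathbf x]^{rs}$ for free modules. I do not expect any genuine obstacle here: the only points deserving a word of justification are the freeness of the exterior power and the linear independence of the monomial generators, both of which are classical facts over a commutative ring, and everything else is bookkeeping of ranks.
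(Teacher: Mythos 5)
Your proof is correct and follows essentially the same route as the paper's: compute the ranks of the two free factors, namely $\binom{n}{j+1}$ for $\bigwedge^{j+1}\Bbbk[\mathbf x]^n$ and $j$ for $M_{j-1}$, and multiply them via the tensor product. The only difference is that you spell out the freeness justifications that the paper takes for granted.
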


\begin{proof}
Since $\bigwedge^{j+1} \Bbbk[\mathbf x]^n$ and $M_{j-1}$ are isomorphic as $\Bbbk[\mathbf{x}]-$modules to $\Bbbk[\mathbf x]^{\binom{n}{j+1}}$ and $\Bbbk[\mathbf{x}]^j$, respectively, for each $j \in \{1, \ldots, n-1\}$, we have that \[\Bbbk[\mathbf x]^X_j = \bigwedge^{j+1} \Bbbk[\mathbf x]^n \otimes_{\Bbbk[\mathbf x]} M_{j-1} \cong \Bbbk[\mathbf x]^{\binom{n}{j+1}} \otimes_{\Bbbk[\mathbf x]} \Bbbk[\mathbf{x}]^j \cong \Bbbk[\mathbf x]^{j \binom{n}{j+1}},\] for each $ j = 1, \ldots, n-1$.
\end{proof}

So, applying the previous lemma, we have the following.

\begin{proposition}\label{prop2}
Let $\phi_0$ be the identity map of $\Bbbk[\mathbf x]$ and, for each $j \in \{ 1, \ldots, n-1\}$, fix a $\Bbbk[\mathbf x]-$module isomorphism $\phi_{j} : \Bbbk[\mathbf x]^X_j \to \Bbbk[\mathbf x]^{j \binom{n}{j+1}}$. If $\beta_j = j \binom{n}{j+1}$ and $\delta_j = \phi_{j-1} \circ d_j \circ \phi_j^{-1},\ j = 1, \ldots, n-1$, then \begin{equation}\label{ecuMFR} 0 \to \Bbbk[\mathbf x]^{\beta_{n-1}} \stackrel{\delta_{n-1}}{\longrightarrow} \Bbbk[\mathbf x]^{\beta_{n-2}} \stackrel{\delta_{n-2}}{\longrightarrow} \cdots \stackrel{\delta_2}{\longrightarrow}\Bbbk[\mathbf x]^{\beta_1} \stackrel{\delta_1}{\longrightarrow} \Bbbk[\mathbf x] \longrightarrow \Bbbk[\mathbf x]/I_A \cong \Bbbk[S] \to 0\end{equation} is a minimal free resolution of $\Bbbk[S]$.
\end{proposition}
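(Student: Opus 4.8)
The plan is to regard the maps $\phi_j$ as the components of a chain isomorphism from the Eagon--Northcott complex to the sequence \eqref{ecuMFR}, and then to invoke the two general facts that a chain isomorphism preserves exactness and preserves minimality. Since the Eagon--Northcott complex is already known, by \cite[Theorem 2]{EN} as recalled above, to be a minimal free resolution of $\Bbbk[S]$, everything reduces to these formal observations together with the rank count of the previous lemma, which supplies $\beta_j = j\binom{n}{j+1}$.

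First I would verify that \eqref{ecuMFR} is indeed a complex with the correct augmentation. From $\delta_j = \phi_{j-1} \circ d_j \circ \phi_j^{-1}$ one gets
\[
\delta_j \circ \delta_{j+1} = \phi_{j-1} \circ d_j \circ (\phi_j^{-1} \circ \phi_j) \circ d_{j+1} \circ \phi_{j+1}^{-1} = \phi_{j-1} \circ (d_j \circ d_{j+1}) \circ \phi_{j+1}^{-1} = 0,
\]
using $d_j \circ d_{j+1} = 0$ in the Eagon--Northcott complex. Because $\phi_0$ is the identity, the augmentation $\Bbbk[\mathbf x] \to \Bbbk[S]$ is untouched, and $\delta_1 = d_1 \circ \phi_1^{-1}$ has the same image $I_A$ as $d_1$; hence the right-hand tail of \eqref{ecuMFR} agrees with that of the Eagon--Northcott complex. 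For exactness, note that the $\phi_j$ (with $\phi_0 = \mathrm{id}$) form, by construction, a commutative ladder between the two augmented complexes in which every vertical arrow is an isomorphism; isomorphic complexes have isomorphic homology in each degree, so the vanishing of the homology of the (augmented) Eagon--Northcott complex forces the vanishing of the homology of \eqref{ecuMFR}. Thus \eqref{ecuMFR} is a free resolution of $\Bbbk[S]$.

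It remains to check minimality. Writing $\mathfrak{m} = (x_1, \ldots, x_n)$ for the irrelevant maximal ideal and $\overline{(\,\cdot\,)} = (\,\cdot\,) \otimes_{\Bbbk[\mathbf x]} \Bbbk[\mathbf x]/\mathfrak{m}$, a graded free resolution is minimal exactly when each of its differentials reduces to the zero map modulo $\mathfrak{m}$. Minimality of the Eagon--Northcott complex gives $\overline{d_j} = 0$ for all $j$, and since reduction modulo $\mathfrak{m}$ is functorial, $\overline{\delta_j} = \overline{\phi_{j-1}} \circ \overline{d_j} \circ \overline{\phi_j^{-1}} = 0$; hence \eqref{ecuMFR} is minimal. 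The argument is essentially formal, so I do not expect a serious obstacle; the one point deserving attention is the $S$-grading, which I would fix by declaring the grading on $\Bbbk[\mathbf x]^{\beta_j}$ to be the one making $\phi_j$ a homogeneous isomorphism (that is, by shifting the degree of each standard basis vector to the $S$-degree of the corresponding generator of $\Bbbk[\mathbf x]^X_j$ read off from \eqref{ecu2} and \eqref{ecu3}). With this convention each $\delta_j$ is $S$-graded and the minimality computation above is unaffected.
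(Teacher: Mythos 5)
Your proposal is correct and follows the same route as the paper: the paper treats Proposition \ref{prop2} as an immediate consequence of \cite[Theorem 2]{EN} (the Eagon--Northcott complex being a minimal free resolution, since $I_A$ has codimension $n-1$) together with the preceding lemma identifying $\Bbbk[\mathbf x]^X_j \cong \Bbbk[\mathbf x]^{j\binom{n}{j+1}}$, which is exactly the transport-of-structure argument you spell out. Your explicit verification that the $\phi_j$ form a chain isomorphism preserving exactness and minimality modulo $\mathfrak{m}$ is precisely the formal content the paper leaves implicit, and your closing remark on gradings is consistent with the paper, which indeed postpones the $S$-graded shifts to the next section.
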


For a clearer understanding of Proposition \ref{prop2}, we provide, as an illustrative example, the well-known minimal free resolution of $\Bbbk[S]$ for $n=3$ (see, for example, \cite[Theorem 2.3]{OjPi} or, in broader generality, the Hilbert-Burch theorem \cite[Theorem 20.15]{Eisenbud}).

\begin{example}\label{ejem2}
Let $S$ be the numerical semigroup generated by $a_1 = 1 + b + b^2, a_2 = 1 + b + b^2 + a$ and $a_3 = 1 + b + b^2 + a(1+b)$. In this case, a minimal free resolution of $\Bbbk[S]$ is equal to 
\[
0 \to \Bbbk[\mathbf{x}]^2 \stackrel{\delta_2}{\longrightarrow} \Bbbk[\mathbf{x}]^3 \stackrel{\delta_1}{\longrightarrow} \Bbbk[\mathbf{x}] \longrightarrow \Bbbk[S] \to 0
\]
where $\delta_2$ and $\delta_1$ are the $\Bbbk[\mathbf x]-$module homomorphisms whose matrices with respect to the corresponding usual bases are 
\[
A_2 = \begin{pmatrix} \medskip x_1^{b} & x_2\\ \medskip x_2^b & x_3\\ x_3^b & x_1^{a+1}\end{pmatrix}\quad \text{and}\quad A_1 = \left(\begin{array}{ccc} x_2^bx_1^{a+1}-x_3^{b+1} & -x_1^{a+b+1}+x_2x_3^b & x_1^bx_3-x_2^{b+1} \end{array}\right),
\] respectively.
Clearly, in this case, $\beta_1 = 3$ and $\beta_2 = 2$.
\end{example}

\section{The minimal $S-$graded free resolution}

The minimal free resolution \eqref{ecuMFR}, given in the previous section by using Eagon-Northcott, is not $S-$graded in general. The reason for this is that the maps $\delta_i,\ i = 1, \ldots, n-1$, described in Proposition \ref{prop2}, are not necessarily $S-$homogeneous of degree $0$. 

To achieve a minimal free resolution $S-$graded of $\Bbbk[S]$, we must appropriately shift the free $\Bbbk[\mathbf x]-$modules that appear in $\Bbbk[\mathbf{x}]^{\beta_j},\ j = 1, \ldots, n$, in such a way the maps $\delta_j,\ j = 1, \ldots, n$, defined in Proposition \ref{prop2} become $S$-homogeneous of degree $0$. More precisely, we need to find positive integers $s_{j,k},\ k = 1, \ldots, \beta_j,\ j = 1, \dots, n-1,$ such that the  maps in the minimal free resolution 
\begin{align*}
0 \to & \bigoplus_{k=1}^{\beta_{n-1}} \Bbbk[\mathbf x](-s_{n-1,k}) \stackrel{\delta_{n-1}}{\longrightarrow}
\bigoplus_{k=1}^{\beta_{n-2}} \Bbbk[\mathbf x](-s_{n-2,k}) \stackrel{\delta_{n-2}}{\longrightarrow} \cdots \\ & \cdots \stackrel{\delta_{2}}{\longrightarrow}
\bigoplus_{k=1}^{\beta_{1}} \Bbbk[\mathbf x](-s_{1,k})  \stackrel{\delta_1}{\longrightarrow} \Bbbk[\mathbf x] \longrightarrow \Bbbk[\mathbf x]/I_A \cong \Bbbk[S] \to 0 
\end{align*}
are $S-$homogeneous of degree $0$. Recall that $\Bbbk[\mathbf x](-s)$ means that the basis elements of $\Bbbk[\mathbf x](-s)$ as $\Bbbk[x]-$module, say $1$, have degree $s$. Thus, for example, $x_1 \in \Bbbk[\mathbf x](-s)$ has $S-$degree $a_1+s$.

\begin{example}
By considering the degree-shift isomorphisms
\[\Bbbk[\mathbf x]^2 \cong \Bbbk[\mathbf x](-b\, a_1 - (b+1) a_3) \bigoplus \Bbbk[\mathbf x](-a_2 - (b+1) a_3) \]
and 
\[\Bbbk[\mathbf x]^3 \cong \Bbbk[\mathbf x](- (b+1) a_3) \bigoplus  \Bbbk[\mathbf x](-a_2-b\, a_3) \bigoplus \Bbbk[\mathbf x](-(b+1) a_2)\] in Example \ref{ejem2}, we obtain a minimal $S-$graded free resolution of $\Bbbk[S]$ because these shifts make $\delta_2$ and $\delta_1$ $S-$homogeneous of degree $0$. 
\end{example}

\begin{remark}\label{Rem 5}
Given $j \in \{1, \ldots, n-1\}$, we have that $\operatorname{Tor}^{\Bbbk[\mathbf x]}_j(\Bbbk, \Bbbk[S])_s \neq 0$ if and only if $s = s_{j,k}$ for some $1 \leq k \leq \beta_j$ (see, e.g. \cite[Lemma 1.32]{MS}); in fact, the number of $s_{j,k}$'s that are equal to a given $s \in S$ is $\dim(\operatorname{Tor}^{\Bbbk[\mathbf x]}_j(\Bbbk, \Bbbk[S])_s)$. Summarizing, the integers $s_{j,k}$ are uniquely determined by $\Bbbk[S]$.
\end{remark}

Our goal is to compute the integers $s_{i,k}$. To start, we introduce additional notation. From now on we will write $a_{n+1} = (a+1) a_1$ and $c = b^n - 1 -a$. 

\begin{lemma}\label{lema6}
With the notation above, $b\, a_i = c + a_{i+1},\ i = 1, \ldots, n$.
\end{lemma}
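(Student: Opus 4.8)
The plan is to reduce everything to the geometric-sum identity satisfied by $a_1$ together with the defining recurrence $a_i - a_{i-1} = a\,b^{i-2}$. First I would record the single computation that drives the whole statement: since $a_1 = \sum_{j=0}^{n-1} b^j$, multiplying by $b-1$ telescopes to $(b-1)a_1 = b^n - 1$, equivalently $b\, a_1 = (b^n - 1) + a_1$. Because $c = b^n - 1 - a$ and $a_2 = a_1 + a$ (the $i=2$ instance of the recurrence), one has $c + a_2 = (b^n - 1 - a) + (a_1 + a) = (b^n-1) + a_1 = b\, a_1$, settling the base case $i = 1$.

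For the inductive step I would compare the claim for consecutive indices. Assuming $b\, a_i = c + a_{i+1}$, subtracting this from the target equality $b\, a_{i+1} = c + a_{i+2}$ shows it suffices to prove $b(a_{i+1} - a_i) = a_{i+2} - a_{i+1}$. When $i+2 \le n$, both differences are governed by the recurrence, namely $a_{i+1} - a_i = a\, b^{i-1}$ and $a_{i+2} - a_{i+1} = a\, b^{i}$, so the required identity is simply $b \cdot a\, b^{i-1} = a\, b^{i}$. This propagates the statement from $i = 1$ up through $i = n-1$.

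The one point that needs care is the final index $i = n$, where $a_{n+1}$ is defined separately as $(a+1)a_1$ rather than through the recurrence, so the step above does not apply verbatim. The key observation is that this separate definition in fact continues the same pattern. Writing $a_n = a_1 + a\sum_{k=0}^{n-2} b^k$ from the recurrence and $a_{n+1} = (a+1)a_1 = a_1 + a\sum_{k=0}^{n-1} b^k$ via $a_1 = \sum_{k=0}^{n-1} b^k$, subtraction gives $a_{n+1} - a_n = a\, b^{n-1}$, exactly the value the recurrence would predict at this index. Hence the difference argument applies once more and yields $b\, a_n = c + a_{n+1}$, completing the range $i = 1, \ldots, n$. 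I expect this identification $a_{n+1} - a_n = a\, b^{n-1}$ to be the only step requiring anything beyond telescoping and routine bookkeeping.
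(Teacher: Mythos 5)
Your proof is correct, and it takes a slightly different route from the paper's. The paper argues directly from the closed form: for $i \in \{2,\ldots,n\}$ it writes $a_i = a_1 + a\sum_{j=0}^{i-2} b^j$, multiplies by $b$, invokes the base computation $b\,a_1 = c + a_2$, and regroups the geometric sum to land on $c + a_{i+1}$ in one uniform chain of equalities; the case $i = n$ is absorbed silently, since $c + a_1 + a\sum_{j=0}^{n-1}b^j = c + (a+1)a_1 = c + a_{n+1}$ holds by the very definition of $a_{n+1}$. You instead induct on $i$, reducing each step to the first-difference identity $b(a_{i+1}-a_i) = a_{i+2}-a_{i+1}$, which the recurrence supplies for free when $i+2 \le n$, and then handle $i = n$ separately by checking that $a_{n+1} - a_n = a\,b^{n-1}$, i.e.\ that the ad hoc definition $a_{n+1} = (a+1)a_1$ continues the recurrence exactly. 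The paper's version is shorter and treats all indices at once; yours has the virtue of isolating and making explicit the only genuinely non-automatic point of the lemma --- that $a_{n+1}$ is precisely the value the recurrence would predict --- which the paper's final equality leaves implicit. Both rest on the same two ingredients (the telescoped identity $b\,a_1 = b^n - 1 + a_1$ and the recurrence), so the difference is one of organization rather than substance.
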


\begin{proof}
Clearly, $b\, a_1 = b \left(\sum_{j=0}^{n-1} b^j\right) = b^n - 1 + a_1 = b^n - 1 - a + a_2 = c + a_2 .$ Now, if $i \in \{2, \ldots, n\}$, then 
\begin{align*}
b\, a_i & = b\, a_1 + b\, a \left(\sum_{j=0}^{i-2} b^j\right) = c + a_2 + a \left(\sum_{j=1}^{i-1} b^j\right)\\ & = c + a_1 + a + a \left(\sum_{j=1}^{i-1} b^j\right) = c + a_1 + a \left(\sum_{j=0}^{i-1} b^j\right) = c + a_{i+1},
\end{align*}
and we are done.
\end{proof}

\begin{proposition}\label{prop7}
The maps in the exact sequence \[\bigoplus_{i=1}^{n-1} \left(\bigoplus_{j=0}^{i-1} \Bbbk[\mathbf{x}](-a_{n-i+1} - b\, a_{n-j})\right) \stackrel{\delta_1}{\longrightarrow} \Bbbk[\mathbf x] \longrightarrow \Bbbk[\mathbf x]/I_A \cong \Bbbk[S] \to 0\] are $S-$homogeneous of degree $0$. In particular, the set $\left\{s_{1,k} \mid k = 1, \ldots, \beta_1 = \binom{n}{2}\right\}$ is equal to \(\left\{c + a_{i_1} + a_{i_2} \mid 1 < i_1 < i_2 \leq n+1 \right\}.\)
\end{proposition}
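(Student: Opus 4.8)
The plan is to compute the $S$-degrees of the generators of $I_A$ directly from the matrix $X$ and to verify that they coincide with the displayed shifts. Recall from Proposition~\ref{prop2} that, up to the fixed isomorphism $\phi_1$, the map $\delta_1$ is the map $d_1$ of \eqref{ecu2}, whose image is generated by the $2\times 2$ minors
\[
f_{ij} := d_1(\mathbf e_i \wedge \mathbf e_j \otimes 1) = x_{1i}\,x_{2j} - x_{1j}\,x_{2i},\qquad 1 \le i < j \le n.
\]
A map of free $S$-graded modules is $S$-homogeneous of degree $0$ exactly when each basis vector is placed in the $S$-degree of its image; hence the statement reduces to two tasks: showing that every $f_{ij}$ is $S$-homogeneous and computing its degree, and then checking that the resulting $\binom{n}{2}$ degrees are precisely the shifts in the displayed direct sum.

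First I would read off the entries of $X$ from \eqref{ecu1}, namely $x_{1i} = x_i^{b}$ for all $i$, together with $x_{2i} = x_{i+1}$ for $i < n$ and $x_{2n} = x_1^{a+1}$. Computing the $S$-degrees of the two monomials of $f_{ij}$ then splits into the case $j < n$, where they equal $b\,a_i + a_{j+1}$ and $b\,a_j + a_{i+1}$, and the case $j = n$, where they equal $b\,a_i + a_{n+1}$ and $b\,a_n + a_{i+1}$ (using $a_{n+1} = (a+1)a_1$). In both cases Lemma~\ref{lema6} rewrites $b\,a_i = c + a_{i+1}$ and $b\,a_j = c + a_{j+1}$ (respectively $b\,a_n = c + a_{n+1}$), so that both monomials acquire the common $S$-degree $c + a_{i+1} + a_{j+1}$. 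Thus $f_{ij}$ is $S$-homogeneous of degree $c + a_{i+1} + a_{j+1}$, and this value is automatically a positive integer, being the $S$-degree of a nonconstant monomial.

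It remains to match these degrees with the shifts. Writing $i_1 = i+1$ and $i_2 = j+1$, as $(i,j)$ runs over $1 \le i < j \le n$ the pair $(i_1, i_2)$ runs bijectively over $1 < i_1 < i_2 \le n+1$, so the set of $S$-degrees is exactly $\{c + a_{i_1} + a_{i_2} \mid 1 < i_1 < i_2 \le n+1\}$, a set of $\binom{n}{2} = \beta_1$ elements; this already establishes the last assertion of the proposition. To recover the compact double-sum form, I would instead substitute $i_1 = n - p + 1$ and $i_2 = n - q + 1$, where $p$ and $q$ denote the outer and inner summation indices of the display; a short check shows that $(p,q)$ with $1 \le p \le n-1$ and $0 \le q \le p-1$ is in bijection with $1 < i_1 < i_2 \le n+1$, and Lemma~\ref{lema6} turns $c + a_{i_2} = c + a_{n-q+1}$ back into $b\,a_{n-q}$, giving $c + a_{i_1} + a_{i_2} = a_{n-p+1} + b\,a_{n-q}$, exactly the shift appearing in the displayed module.

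The argument rests entirely on Lemma~\ref{lema6}, which forces the two monomials of each minor to share an $S$-degree. The only delicate points are the separate treatment of the last column of $X$ (where $x_{2n} = x_1^{a+1}$ rather than $x_{i+1}$) and the final reindexing needed to present the shifts in the double-sum form; I expect neither to pose a substantial obstacle.
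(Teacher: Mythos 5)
Your proposal is correct and takes essentially the same route as the paper's proof, which simply invokes the definition of $d_1$ in \eqref{ecu2} (the $2\times 2$ minors of $X$) for the homogeneity claim and Lemma \ref{lema6} for identifying the shifts. You have merely written out the details the paper calls ``straightforward,'' including the special treatment of the last column of $X$ and the reindexing between the double-sum form and the pairs $1 < i_1 < i_2 \leq n+1$, both of which check out.
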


\begin{proof}
In \cite{BCO} it is proved that $I_A$ is minimally generated by $2 \times 2-$minors of the matrix $X$ defined in \eqref{ecu1}. Then first part follows straightforward from the definition of $d_1$ (see \eqref{ecu2}). Now, the second part is an immediate consequence of Lemma \ref{lema6}.
\end{proof}

Recall that $x \in \mathbb{N} \setminus S$ is said to be a \emph{pseudo-Frobenius element of $S$} if $x + s \in S$ for every $s \in S \setminus \{0\}$. This set is known to be finite and is denoted by $\operatorname{PS}(S)$ (see \cite[Section 2.4]{libro-ns} for more details).

\begin{lemma}\label{lema7}
The set $\{s_{n-1, k} \mid k = 1, \ldots, \beta_{n-1} = n-1\}$ is equal to  
\[
\left\{ k\, c + \sum_{i=2}^{n+1} a_i \mid k \in \{1, \ldots, \beta_{n-1} = n-1\} \right\}.
\]
\end{lemma}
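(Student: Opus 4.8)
The plan is to determine, for \emph{every} homological level $j$, the $S$-degree that must be attached to each natural generator $\mathbf{e}_{i_1} \wedge \cdots \wedge \mathbf{e}_{i_{j+1}} \otimes y_1^{u_1} y_2^{u_2}$ of $\Bbbk[\mathbf{x}]^X_j$ in order for the differential $d_j$ of \eqref{ecu3} to be $S$-homogeneous of degree $0$, and then simply to read off the case $j = n-1$. Concretely, I would claim that this degree equals
\[
(u_1 + 1)\, c + \sum_{l=1}^{j+1} a_{i_l + 1},
\]
with the convention $a_{n+1} = (a+1) a_1$ fixed above. The whole argument rests on a single observation drawn from the matrix $X$ in \eqref{ecu1}: the two entries of its $i_l$-th column have $S$-degrees $\deg(x_{1,i_l}) = b\, a_{i_l} = c + a_{i_l+1}$ and $\deg(x_{2,i_l}) = a_{i_l+1}$, where the first equality is Lemma \ref{lema6} and the second uses the convention for $a_{n+1}$ coming from the exceptional last column.

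First I would record the base case $j = 1$: here $u_1 = u_2 = 0$, and the claimed degree $c + a_{i_1+1} + a_{i_2+1}$ is exactly the degree of the $2 \times 2$ minors computed in Proposition \ref{prop7}, so nothing new is needed. Then I would check the claim directly against the differential. A single summand of $d_j(\mathbf{e}_{i_1} \wedge \cdots \wedge \mathbf{e}_{i_{j+1}} \otimes y_1^{u_1} y_2^{u_2})$, as written in \eqref{ecu3}, multiplies by $x_{k,i_l}$, deletes $\mathbf{e}_{i_l}$, and divides the $y$-monomial by $y_k$. Using the two degree identities, one checks that in the case $k=1$ (which lowers $u_1$ by one and contributes $c + a_{i_l+1}$) and in the case $k=2$ (which lowers $u_2$ by one and contributes $a_{i_l+1}$), the degree of the target generator, supplied by the claimed formula at level $j-1$, plus $\deg(x_{k,i_l})$, returns precisely $(u_1+1)c + \sum_{l=1}^{j+1} a_{i_l+1}$. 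Since every summand reproduces this same value, the posited assignment makes $d_j$ $S$-homogeneous of degree $0$; because the Eagon-Northcott resolution is minimal (Proposition \ref{prop2}), the uniqueness recalled in Remark \ref{Rem 5} identifies these degrees with the shifts $s_{j,k}$.

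Finally I would specialize to the top of the resolution, $j = n-1$. There $\bigwedge^{n} \Bbbk[\mathbf{x}]^n$ has the single generator $\mathbf{e}_1 \wedge \cdots \wedge \mathbf{e}_n$, so $i_l = l$ and $\sum_{l=1}^{n} a_{i_l+1} = \sum_{i=2}^{n+1} a_i$, while the constraint $u_1 + u_2 = n-2$ makes $k := u_1 + 1$ range over $\{1, \ldots, n-1\}$. Substituting into the formula yields the degrees $k\,c + \sum_{i=2}^{n+1} a_i$ for $k = 1, \ldots, n-1$, which is exactly the asserted set.

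The step I expect to demand the most care is the inductive verification of homogeneity: one must keep the bookkeeping of $X$ straight (the first row being the $b$-th powers and the last column carrying the exceptional entries $x_n^b$ and $x_1^{a+1}$), respect the asterisk convention $u_k > 0$ in \eqref{ecu3} so that only legitimate summands are compared, and confirm that the Koszul signs $(-1)^{l+1}$, though essential for $d_j \circ d_{j+1} = 0$, play no role in the degree computation. Everything else reduces to direct substitution via Lemma \ref{lema6}.
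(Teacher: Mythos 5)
Your proposal is correct, but it reaches Lemma \ref{lema7} by a genuinely different route than the paper. The paper's own proof is a two-line argument that never looks inside the resolution: it invokes \cite[Corollary 17]{MOT}, which says that the top graded Betti degrees, shifted by $-\sum_{i=1}^n a_i$, are precisely the pseudo-Frobenius numbers of $S$, and then \cite[Corollary 30]{BCO2}, which computes $\operatorname{PF}(S) = \{k\,c + a\,a_1 \mid k = 1, \ldots, n-1\}$; the claim follows from the identity $a\,a_1 + \sum_{i=1}^n a_i = (a+1)a_1 + \sum_{i=2}^n a_i = \sum_{i=2}^{n+1} a_i$. You instead assign to each Eagon--Northcott generator $\mathbf{e}_{i_1} \wedge \cdots \wedge \mathbf{e}_{i_{j+1}} \otimes y_1^{u_1}y_2^{u_2}$ the degree $(u_1+1)\,c + \sum_l a_{i_l+1}$ and verify, via Lemma \ref{lema6} and the degrees of the entries of $X$, that every summand of $d_j$ in \eqref{ecu3} preserves this degree; minimality (Proposition \ref{prop2}) and uniqueness of the shifts (Remark \ref{Rem 5}) then identify these degrees with the $s_{j,k}$, and the case $j = n-1$ (single exterior generator, $u_1+1$ ranging over $\{1,\ldots,n-1\}$) gives the lemma. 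This is exactly the technique the paper reserves for its main theorem, and your argument in fact proves that theorem in full, with Lemma \ref{lema7} and Proposition \ref{prop8} falling out as special cases; it is self-contained and non-circular, since you never invoke the lemma or the theorem themselves. What the paper's route buys is brevity and a conceptual link to pseudo-Frobenius numbers, keeping the lemma independent of the degree bookkeeping (the paper then uses the lemma as an anchor case in the theorem's proof); what your route buys is independence from the external results of \cite{MOT} and \cite{BCO2}, at the cost of front-loading the full homogeneity verification that the paper would have to do later anyway.
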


\begin{proof}
By \cite[Corollary 17]{MOT}, we have that \[\operatorname{PF}(S) = \left\{s - \sum_{i=1}^n a_i \mid s \in \{s_{n-1,1}, \ldots, s_{n-1,n-1}\}\right\}.\] Now since, by \cite[Corollary 30]{BCO2}, $\operatorname{PF}(S) = \{ k\, c + a\, a_1\ \mid\ k = 1, \ldots, n-1\}$, our claim follows.
\end{proof}

\begin{proposition}\label{prop8}
The maps in the exact sequence \[0 \to \bigoplus_{k=1}^{n-1} \Bbbk[\mathbf{x}](-s_{n-1,k}) \stackrel{\delta_{n-1}}{\longrightarrow} \bigoplus_{k=2}^{n-1} \left( \bigoplus_{j=1}^{n}\Bbbk[\mathbf{x}](b\, a_j - s_{n-1,k})\right)\] are $S-$homogeneous of degree $0$. Moreover, the set $\left\{s_{n-2,k} \mid k = 1, \ldots, \beta_{n-2} = (n-2)n\right\}$ is equal to \[\left\{k\, c + \sum_{\substack{i=2 \\ i \neq j+1}}^{n+1} a_i \mid k \in \{1, \ldots, n-2\}\ \text{and}\ j \in \{1, \ldots, n\}\right\}.\] \end{proposition}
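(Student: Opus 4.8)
The plan is to extract both assertions from one closed-form expression for the $S$-degree carried by each Eagon--Northcott generator, and then rewrite that expression in the two ways the statement requires. First I would record the candidate shift: to the generator $\mathbf{e}_{i_1} \wedge \cdots \wedge \mathbf{e}_{i_{j+1}} \otimes y_1^{u_1} y_2^{u_2}$ of $\Bbbk[\mathbf x]^X_j$ I assign the degree $(u_1+1)\,c + \sum_{l=1}^{j+1} a_{i_l+1}$. Recalling that the column entries of $X$ have degrees $\deg(x_{1l}) = b\,a_l$ and $\deg(x_{2l}) = a_{l+1}$, Lemma \ref{lema6} rewrites the former as $c + a_{l+1}$. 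Then a single summand of \eqref{ecu3} multiplies a target generator by $x_{1l}$ (which removes the factor $a_{i_l+1}$ and lowers $u_1$ by one, i.e. adds $c + a_{l+1}$) or by $x_{2l}$ (which removes $a_{i_l+1}$ and lowers $u_2$, i.e. adds $a_{l+1}$); in both cases the assigned degree is preserved, so the differential is $S$-homogeneous of degree $0$. By the uniqueness recalled in Remark \ref{Rem 5}, these are exactly the shifts $s_{j,k}$.

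With the formula in hand, the proposition is its specialization to levels $n-1$ and $n-2$. At level $n-1$ the wedge factor is forced to be $\mathbf{e}_1 \wedge \cdots \wedge \mathbf{e}_n$, so the source generators are indexed by $u_1 \in \{0,\ldots,n-2\}$ and carry degree $(u_1+1)c + \sum_{i=2}^{n+1} a_i$, matching $s_{n-1,u_1+1}$ from Lemma \ref{lema7}. At level $n-2$ a generator omits a single index $m \in \{1,\ldots,n\}$ and has $u_1 \in \{0,\ldots,n-3\}$; its degree is $(u_1+1)c + \sum_{2 \le i \le n+1,\, i \ne m+1} a_i$. Writing $k = u_1+1 \in \{1,\ldots,n-2\}$ and $j = m$ reproduces verbatim the set in the ``moreover'' clause, settling that part.

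For the homogeneity of $\delta_{n-1}$ and the exact shape of its target, I would apply the differential $d_{n-1}$ (which represents $\delta_{n-1}$ after the identifications of Proposition \ref{prop2}) to the source generator of degree $s_{n-1,k}$ and follow each summand. The $x_{2l}$-summands land in the level-$(n-2)$ generator with the same $u_1$ and the $x_{1l}$-summands in the one with $u_1$ lowered by one; in either case the degree balance $\deg(x_{\bullet l}) + \deg(\text{target}) = s_{n-1,k}$ holds by the first paragraph, giving homogeneity of degree $0$. To obtain the target as displayed, I would parametrize each level-$(n-2)$ generator by its $x_{1j}$-preimage: the generator omitting index $j$ with $u_1 = k-2$ is the column-$j$, row-one image of the source of index $k$, and by Lemma \ref{lema6} its degree equals $s_{n-1,k} - (c + a_{j+1}) = s_{n-1,k} - b\,a_j$, with $k \in \{2,\ldots,n-1\}$ and $j \in \{1,\ldots,n\}$. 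This is precisely $\bigoplus_{k=2}^{n-1}\bigoplus_{j=1}^{n}\Bbbk[\mathbf x](b\,a_j - s_{n-1,k})$.

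The step I expect to require the most care is reconciling the two parametrizations of the level-$(n-2)$ shifts. In the ``moreover'' set the integer $k = u_1+1$ is the intrinsic parameter of a level-$(n-2)$ generator and runs over $\{1,\ldots,n-2\}$, whereas in the target of $\delta_{n-1}$ the integer $k$ is the $s_{n-1}$-index of the source and runs over $\{2,\ldots,n-1\}$ (the row-one image forces $u_1 \ge 1$ upstairs, hence $k \ge 2$). The point to verify is that the assignment $(k,j) \mapsto$ (generator omitting $j$ with $u_1 = k-2$) is a bijection onto the $(n-2)n$ generators at level $n-2$, so that the displayed target is the full module rather than a proper summand; once the single extra $c$ supplied by Lemma \ref{lema6} is placed correctly, the two descriptions agree.
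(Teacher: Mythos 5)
Your proposal is correct, but it reaches Proposition \ref{prop8} by a genuinely different route than the paper. You first establish the uniform degree assignment $\mathbf{e}_{i_1}\wedge\cdots\wedge\mathbf{e}_{i_{j+1}}\otimes y_1^{u_1}y_2^{u_2}\mapsto (u_1+1)\,c+a_{i_1+1}+\cdots+a_{i_{j+1}+1}$ for the \emph{whole} Eagon--Northcott complex and then specialize to the two top levels; this is precisely the argument the paper reserves for its final Theorem, which is proved \emph{after} Proposition \ref{prop8} and cites it for the case $j=n-1$. The paper's own proof of Proposition \ref{prop8} is more local: it writes down the explicit block (staircase) matrix of $d_{n-1}$, whose nonzero columns are copies of the columns of $X$, reads off homogeneity with respect to the displayed shifts by inspection, and then obtains the ``moreover'' set from the single computation $s_{n-1,k}-b\,a_j=(k-1)\,c+\sum_{i\neq j+1}a_i$ using Lemmas \ref{lema6} and \ref{lema7}. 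Your route costs more bookkeeping but buys more: it treats all homological degrees at once, makes the final Theorem an immediate corollary, and removes the need to handle the endpoint levels separately from $j\in\{2,\ldots,n-2\}$. Both arguments consume the same ingredients (Lemma \ref{lema6} to trade $b\,a_l$ for $c+a_{l+1}$, Lemma \ref{lema7} to pin the top shifts, Remark \ref{Rem 5} for uniqueness), and your reconciliation of the two parametrizations of the level-$(n-2)$ generators is exactly the re-indexing $k\mapsto k-1$ that the paper performs.

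One point you should patch before the uniqueness step: Remark \ref{Rem 5} identifies your assigned degrees with the $s_{j,k}$ only if the \emph{entire} resolution is $S$-graded by your assignment, and your verification covers only the differentials given by \eqref{ecu3}, i.e.\ $d_j$ for $j\geq 2$. You must also check $d_1$, defined in \eqref{ecu2}: the minor $x_{1i}x_{2j}-x_{2i}x_{1j}$ has $S$-degree $b\,a_i+a_{j+1}=c+a_{i+1}+a_{j+1}$ by Lemma \ref{lema6}, which is exactly your assigned degree of $\mathbf{e}_i\wedge\mathbf{e}_j\otimes 1$ (equivalently, cite Proposition \ref{prop7}). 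Without this one-line check, the degrees at levels $1,\ldots,n-1$ are pinned only relative to one another, not to the grading of $\Bbbk[\mathbf x]$ itself, and the identification with the $s_{j,k}$'s via graded $\operatorname{Tor}$ does not follow.
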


\begin{proof}
For the first part, it suffices to observe the matrix of the map $d_{n-1}$ defined in \eqref{ecu3} with respect to the usual bases is 
\[
\left(
\begin{array}{ccccccc}
x_1^b & x_2 & 0 & \ldots & 0 & 0 \\
\vdots & \vdots & \vdots & & \vdots & \vdots \\
x_{n-1}^b & x_n & 0 & \ldots & 0 & 0 \\
x_n^b & x_1^{a+1} & 0 & \ldots & 0 & 0 \\
0 & x_1^b & x_2 & \ldots & 0 & 0 \\
\vdots & \vdots & \vdots & & \vdots & \vdots \\
0 & x_{n-1}^b & x_n & \ldots & 0 & 0 \\
0 & x_n^b & x_1^{a+1} & \ldots & 0 & 0 \\ 
\vdots & \vdots & \vdots & & \vdots & \vdots \\ 
0 & 0 & 0 & \ldots & x_1^b & x_2  \\
\vdots & \vdots & \vdots & & \vdots & \vdots \\
0 & 0 & 0 & \ldots & x_{n-1}^b & x_n  \\
0 & 0 & 0 & \ldots & x_n^b & x_1^{a+1}  \\
\end{array}
\right).
\]
For the second part, we first observe that, by Lemma \ref{lema7}, we can suppose $s_{n-1,k} = k\, c + \sum_{i=2}^{n+1} a_i,\ k = 2, \ldots, n-1$. Therefore, by Lemma \ref{lema6}, we have that 
\begin{align*}
s_{n-1, k} - b\, a_j  & = k\, c + \sum_{i=2}^{n+1} a_i - b\, a_j = k\, c + \sum_{i=2}^{n+1} a_i - c - a_{j+1}\\ & = (k-1) c + \sum_{i=2}^{n+1} a_i - a_{j+1} = (k-1) c + \sum_{\substack{i=2 \\ i \neq j+1}}^{n+1} a_i,
\end{align*}
for every $j \in \{1, \ldots, n\}$ and $k \in \{2, \ldots, n-1\}$.
\end{proof}

Now, we can finally state and prove the main theorem about the minimal $S-$graded free resolution of the generalized repunit $\Bbbk-$algebra $\Bbbk[S]$.

\begin{theorem}
The set $B_j := \{s_{j,k} \mid k = 1, \ldots, \beta_j = j \binom{n}{j+1}\}$ is equal to \[B'_j := \left\{ k\, c + a_{i_1} + \cdots + a_{i_{j+1}}\ \mid\ k \in \{1, \ldots, j\}\ \text{and}\ 1 < i_1 < \cdots < i_{j+1} \leq n+1 \right\},\] for every $j \in \{1, \ldots, n-1\}$.
\end{theorem}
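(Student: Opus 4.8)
The plan is to assign to every basis element of $\Bbbk[\mathbf{x}]^X_j$ an explicit $S$-degree, check that with these degrees as shifts each differential becomes $S$-homogeneous of degree $0$, and then obtain $B'_j$ by a reindexing. First I would record the $S$-degrees of the entries of the matrix $X$ in \eqref{ecu1}. Since $x_{1,i} = x_i^b$ and $x_{2,i} = x_{i+1}$ for $i < n$, while $x_{2,n} = x_1^{a+1}$ has $S$-degree $(a+1)a_1 = a_{n+1}$, the convention $a_{n+1} = (a+1)a_1$ gives uniformly $\deg(x_{1,i}) = b\,a_i$ and $\deg(x_{2,i}) = a_{i+1}$ for $i = 1, \ldots, n$. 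Applying Lemma \ref{lema6} rewrites the first of these as $\deg(x_{1,i}) = c + a_{i+1}$.

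The key step is the claim that the shift attached to the basis element $\mathbf{e}_{i_1} \wedge \cdots \wedge \mathbf{e}_{i_{j+1}} \otimes y_1^{u_1} y_2^{u_2}$ of $\Bbbk[\mathbf{x}]^X_j$ must be
\[
s = (u_1 + 1)\, c + \sum_{l=1}^{j+1} a_{i_l + 1}.
\]
I would verify this by checking that $d_j$, as given in \eqref{ecu3}, is $S$-homogeneous of degree $0$ for this choice. Each summand carries the factor $x_{k,i_l}$ and lands on the basis element with $i_l$ omitted and $u_k$ lowered by one. For $k = 2$ the degree $a_{i_l+1}$ of $x_{2,i_l}$ exactly compensates the missing term $a_{i_l+1}$ in the target; for $k = 1$ the degree $c + a_{i_l+1}$ of $x_{1,i_l}$ compensates both that missing term and the drop of one $c$ caused by lowering $u_1$, and this last balancing is precisely what Lemma \ref{lema6} supplies. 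The case $j=1$, where \eqref{ecu2} replaces \eqref{ecu3} and the target $\Bbbk[\mathbf{x}]$ sits in degree $0$, is checked directly and recovers Proposition \ref{prop7}.

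With the shifts identified, the conclusion is a matter of reindexing. As $u_1$ ranges over $\{0, \ldots, j-1\}$ and $\{i_1 < \cdots < i_{j+1}\}$ over the $(j+1)$-subsets of $\{1, \ldots, n\}$, the assignment $(u_1, \{i_1, \ldots, i_{j+1}\}) \mapsto \big(u_1 + 1,\ \{i_1 + 1, \ldots, i_{j+1} + 1\}\big)$ is a bijection onto the pairs $\big(k, \{i'_1 < \cdots < i'_{j+1}\}\big)$ with $k \in \{1, \ldots, j\}$ and $1 < i'_1 < \cdots < i'_{j+1} \leq n+1$, and under it $s$ becomes $k\,c + a_{i'_1} + \cdots + a_{i'_{j+1}}$. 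Because the Eagon--Northcott resolution of Proposition \ref{prop2} is already minimal, the shifts just computed are, by the uniqueness recorded in Remark \ref{Rem 5}, exactly the invariants $s_{j,k}$; hence $B_j = B'_j$. Propositions \ref{prop7} and \ref{prop8} are the instances $j = 1$ and $j = n-2$ and serve as consistency checks.

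The main obstacle is pinning down the coefficient of $c$ in $s$: the exterior factor $\bigwedge^{j+1}\Bbbk[\mathbf{x}]^n$ contributes $\sum_{l} a_{i_l+1}$ transparently, but the contribution of the $M_{j-1}$ factor is subtler, and one must recognize that it is the exponent $u_1$ of $y_1$ --- the variable paired with the top row $x_i^b$ of $X$ --- that governs the multiplicity $(u_1+1)$ of $c$. Once this is seen, the homogeneity verification collapses onto the single identity $b\,a_i = c + a_{i+1}$ of Lemma \ref{lema6}, and everything else is bookkeeping.
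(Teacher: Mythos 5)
Your proposal is correct and follows essentially the same route as the paper: the same shift assignment $(u_1+1)\,c + \sum_{l} a_{i_l+1}$ to each Eagon--Northcott basis element $\mathbf{e}_{i_1} \wedge \cdots \wedge \mathbf{e}_{i_{j+1}} \otimes y_1^{u_1}y_2^{u_2}$, the same homogeneity check reducing to Lemma \ref{lema6}, and the same appeal to minimality plus the uniqueness of graded shifts (Remark \ref{Rem 5}). The only cosmetic difference is that you run the argument uniformly for all $j$ (recovering Proposition \ref{prop7} and Lemma \ref{lema7} as special cases), whereas the paper treats the extreme homological degrees separately and restricts the general computation to $j \in \{2, \ldots, n-2\}$.
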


\begin{proof}
First, we note that, by Propositions \ref{prop7} and \ref{prop8}, we already know that the result is true for $j = 1$ and $j = n-1$, respectively.

Let $j \in \{2, \ldots, n-2\}$. If we fix a bijection $\sigma_j : B_j \to B'_j$, then we have the isomorphism of free $\Bbbk[\mathbf x]-$modules \[\bigoplus_{k=1}^{\beta_j} \Bbbk[\mathbf{x}](-s_{j,k})  \longrightarrow  F_j := \bigoplus_{1 < i_1 < \cdots < i_{j+1} \leq n+1} \left(\bigoplus_{k = 1}^{j}  \Bbbk[\mathbf x](-k\, c -  a_{i_1} - \cdots - a_{i_{j+1}}) \right) \]
such that the element in the usual basis of left-hand module which has a $1$ at place $\Bbbk[\mathbf{x}](-s_{j,k})$ and zeros elsewhere is sent to the element in the usual basis of $F_j$ which has a $1$ at place $\Bbbk[\mathbf{x}](-\sigma(s_{j,k}))$ and zeros elsewhere . So, if we prove that, for each $j \in \{2, \ldots, n-2\}$, there exist $\Bbbk[\mathbf x]-$module isomorphisms $\phi_j : \Bbbk[\mathbf x]_j^X \to F_j$ and $\phi_{j-1} : \Bbbk[\mathbf x]_{j-1}^X \to F_{j-1}$ such that \[\delta_j := \phi_{j-1} \circ d_j \circ \phi_j^{-1} : F_j \to F_{j-1}\] is $S-$homogeneous of degree $0$, where $d_j : \Bbbk[\mathbf x]_j^X \to  \Bbbk[\mathbf x]_{j-1}^X$ is the $\Bbbk[\mathbf{x}]-$module homomorphism defined in Section \ref{Sect2}, we are done since the $s_{j,k}$ are uniquely defined (see Remark \ref{Rem 5}).

Let $j \in \{2, \ldots, n-2\}$ and let us define the $\Bbbk[\mathbf x]-$module isomorphism $\phi_j : \Bbbk[\mathbf x]_j^X \to F_j$  such that $\phi_j(\mathbf{e}_{i_1} \wedge \cdots \wedge \mathbf{e}_{i_{j+1}} \otimes y_1^{u_1} y_2^{u_2})$ is equal to the element that has a $1$ in the coordinate corresponding to the direct summand $\Bbbk[\mathbf x](-(u_1 + 1)\, c  - a_{i_1+1} - \cdots - a_{i_{j+1}+1})$ and zeros elsewhere (recall that $u_1, u_2 \in \mathbb{N}$ verifies $u_1+u_2 = j-1$). Since, given $j \in \{2, \ldots, n-2\}$ and $l \in \{1, \ldots, j\}$, the $S-$degree of the $k\, i_j-$th entry of the matrix $X$ defined in \eqref{ecu1} is 
\[
\left\{
\begin{array}{lcl}
b\, a_{i_l} & \text{if} & k=1; \\
a_{i_l+1} & \text{if} & k=2,
\end{array}
\right.
\]
we have that the $S-$degree of $\phi_{j-1}(x_{k i_l} \mathbf{e}_{i_1} \wedge \cdots \wedge \widehat{\mathbf{e}_{i_l}}  \wedge \cdots \wedge \mathbf{e}_{i_{j+1}} \otimes y_1^{u_1} y_2^{u_2} y_k^{-1})$ is 
\[
\left\{
\begin{array}{lcl}
b\, a_{i_l} + u_1\, c + a_{i_1+1} + \cdots + a_{i_{j+1}+1} - a_{i_l+1} & \text{if} & k=1; \\
a_{i_l+1} + (u_1+1)\, c + a_{i_1+1} + \cdots + a_{i_{j+1}+1} - a_{i_l+1}  & \text{if} & k=2,
\end{array}
\right.
\]
In the first case, by Lemma \ref{lema6}, we have that 
\[b\, a_{i_l} + u_1\, c + a_{i_1+1} + \cdots + a_{i_{j+1}+1} - a_{i_l+1} = \]
\[c + a_{i_l+1}  + u_1 c + a_{i_1+1} + \cdots + a_{i_{j+1}+1} - a_{i_l+1} = \]
\[(u_1+1) c + a_{i_1+1} + \cdots + a_{i_{j+1}+1}.\]
and, in the second case, we have that \[a_{i_l+1} + (u_1+1) c + a_{i_1+1} + \cdots + a_{i_{j+1}+1} - a_{i_l+1} = \]
\[(u_1+1) c + a_{i_1+1} + \cdots + a_{i_{j+1}+1}.\]
So, the $S-$degree $\phi_{j-1}(x_{k i_l} \mathbf{e}_{i_1} \wedge \cdots \wedge \widehat{\mathbf{e}_{i_l}}  \wedge \cdots \wedge \mathbf{e}_{i_{j+1}} \otimes y_1^{u_1} y_2^{u_2} y_k^{-1})$ is equal to the $S-$degree of $\mathbf{e}_{i_1} \wedge \cdots \wedge \mathbf{e}_{i_{j+1}} \otimes y_1^{u_1} y_2^{u_2}$, for every $j \in \{2, \ldots, n-2\}$ and $l \in \{1, \ldots, j\}.$ Therefore, since
\[ d_j(\mathbf{e}_{i_1} \wedge \cdots \wedge \mathbf{e}_{i_{j+1}} \otimes y_1^{u_1} y_2^{u_2}) = \sum_{k\stackrel{*}{=}1}^2 \sum_{l=1}^{j+1} (-1)^{l+1} x_{k i_l} \mathbf{e}_{i_1} \wedge \cdots \wedge \widehat{\mathbf{e}_{i_l}}  \wedge \cdots \wedge \mathbf{e}_{i_{j+1}} \otimes y_1^{u_1} y_2^{u_2} y_k^{-1},\] for every $j \in \{2, \ldots, n-1\}$,  we conclude that, for our choice of the isomorphisms $\phi_j,\ j = 2, \ldots, n-2$, the maps 
$\delta_j = \phi_{j-1} \circ d_j \circ \phi_j$ are $S-$homogeneous of degree zero, for every $j = 2, \ldots, n-2$. 
\end{proof}

\medskip
\noindent \textbf{COI Statement.} The authors declared that they have no conflict of interest.

\medskip
\noindent \textbf{Acknowledgments.} 
The authors wish to thank Manuel B. Branco for his valuable suggestions that improved the presentation of this article. We also thank Naoyuki Matsuoka for letting us know his interesting results on Pseudo-Frobenius numbers and semigroup ideals associated to certain numerical semigroups.

\end{document}